\newtheorem{thm}{Theorem}[section]
\newtheorem{lem}[thm]{Lemma}
\newtheorem{prop}[thm]{Proposition}
\theoremstyle{definition}
\newtheorem{dfn}[thm]{Definition}
\newtheorem{eg}[thm]{Example}
\newtheorem{conj}[thm]{Conjecture}
\numberwithin{equation}{section}
\def\e{\mathrm{e}}
\def\o{\mathrm{o}}
\def\={\;=\;}
\def\gge{\;\ge\;}
\def\lle{\;\le\;}
\def\rmand{\qquad\text{and}\qquad}
\def\psim{\sim_{\mathrm{p}}}
\def\wsim{\sim_{\mathrm{w}}}
\def\LC{\mathcal{L}}
\crefname{def}{Def.}{Defs.}
\Crefname{def}{Def.}{Defs.}
\crefname{thm}{Theorem}{Theorems}
\Crefname{thm}{Theorem}{Theorems}
\crefname{eg}{Example}{Examples}
\Crefname{eg}{Example}{Examples}
\crefname{ineq}{Ineq.}{Ineqs.}
\Crefname{ineq}{Inequality}{Inequalities}
\title[Partial Synchronicity of Log-concave Sequences]
{Convolution Preserves Partial Synchronicity of Log-concave Sequences}
\author[H. Hu]{H. Hu}
\address{School of Mathematical Sciences, \& LMAM\\
Peking University, 100871 Beijing, P. R. China; 
email: huhan@pku.edu.cn}
\author[D.G.L. Wang]{David G.L. Wang$^\dag$$^\ddag$}
\address{
$^\dag$School of Mathematics and Statistics, Beijing Institute of Technology, 102488 Beijing, P. R. China\\
$^\ddag$Beijing Key Laboratory on MCAACI, Beijing Institute of Technology, 102488 Beijing, P. R. China\\
email: glw@bit.edu.cn}
\author[F. Zhao]{F. Zhao}
\address{School of Mathematical Sciences, \& LMAM\\
Peking University, 100871 Beijing, P. R. China; 
email: zhf327@pku.edu.cn}
\author[T.Y. Zhao]{T.Y. Zhao}
\address{School of Mathematical Sciences, \& LMAM\\
Peking University, 100871 Beijing, P. R. China; 
email: zhaotongyuan@pku.edu.cn}
\keywords{Log-concavity}
\begin{document}

\begin{abstract}   
In a recent proof of the log-concavity of genus polynomials of some families of graphs, Gross et al.\ defined the weakly synchronicity relation between log-concave sequences, and conjectured that the convolution operation by any log-concave sequence preserves weakly synchronicity. We disprove it by providing a counterexample. Furthermore, we find the so-called partial synchronicity relation between log-concave sequences, which is (i) weaker than the synchronicity, (ii) stronger than the weakly synchronicity, and (iii) preserved by the convolution operation.
\end{abstract}
\maketitle

\section{\large Introduction}

The log-concavity of sequences of nonnegative numbers has been paid extensive and intensive attention 
during the past thirty years, see Stanley~\cite{Sta89} and Brenti~\cite{Bre89,Bre94}.
In the late 1980s, Gross et al.~\cite{GRT89} posed the LCGD conjecture that the genus polynomial of every graph is log-concave,
which firstly connected the log-concavity of sequences with topological graph theory, 
or more precisely, with the surface embedding of graphs.
For survey books of topological graph theory, see~\cite{GT01B,BW09B}.
In the recent work~\cite{GMTW15}, Gross et al.\ established a criterion determining the log-concavity 
of sum of products of log-concave polynomials. 
With aid of the criterion, they confirmed the LCGD conjecture for several families of 
graphs generated by vertex- or edge-amalgamations, including the graphs called iterated $4$-wheels.

The criterion is considered to have its own interest,
since it deals with the intrinsic arithmetic relations between log-concave polynomials.
See~\cite{BBL09,Men69,Lig97} for related papers.
The idea of the criterion consists of three key parts, the synchronicity, the radio-dominance,
and the lexicographicity. 
It is the synchronicity part, 
which originally arises from common facts observed from topological embeddings of graphs into surfaces, 
starts the whole development of the new log-concave results.

Though the synchronicity relation is sufficient to judge the log-concavity of positive linear combination of log-concave polynomials,
Gross et~al.\ managed to weaken it to certain weakly synchronicity relation. 
The first power of such a weaker relation was supposed to be preserved by sequence convolution,
which was posed as the following conjecture; see \cite[Conjecture~2.13]{GMTW15}.

\begin{conj}\label{conj}
Let $A$, $B$, $C$ be three log-concave nonnegative sequences without internal zeros. 
If $A\wsim B$, then the convolution sequences $A*C$ and $B*C$ are weakly synchronized.
\end{conj}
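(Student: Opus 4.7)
Since the abstract already announces that \cref{conj} is false, my plan is to disprove it by constructing an explicit counterexample rather than attempt a proof. The strategy has three stages: pin down the precise working definition of $\wsim$, identify the ``gap'' between it and the stronger synchronicity relation (which is known to be preserved by convolution with a log-concave sequence), and then search for a triple $(A,B,C)$ whose convolutions land in that gap.

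The first step is to translate $A \wsim B$ into a small collection of elementary inequalities among the entries $a_i$ and $b_j$, and to compare them with those defining full synchronicity. A counterexample must exploit exactly those inequalities that $A$ and $B$ satisfy only weakly, so that convolution with some $C$ can break the corresponding inequalities for $A*C$ and $B*C$. Understanding which particular inequalities have been weakened is essential for pruning the search, because if $A$ and $B$ happen to be fully synchronized then any $C$ will leave the relation intact.

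Next, I would hand-pick short log-concave sequences $A$ and $B$ (say three or four nonzero terms) satisfying $A \wsim B$ but not the stronger synchronicity, and whose consecutive ratios $a_{i+1}/a_i$ and $b_{i+1}/b_i$ differ as sharply as possible at the index where the weak-synchronicity inequality is tight. I would then scan short log-concave sequences $C$, compute $A*C$ and $B*C$, and check the critical inequality. Because convolution with a skewed $C$ shifts the modes and ratios of the two products by unequal amounts, a careful choice of $C$ should tip a borderline inequality that had been satisfied before convolving.

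The main obstacle will be locating a viable candidate at all: verifying a counterexample only requires checking finitely many inequalities, but most triples will accidentally satisfy weak synchronicity of the convolutions because convolution is a mild smoothing operation. To cope with this I would bias the search toward triples in which the weak-synchronicity inequality for $A$ and $B$ is already close to equality at a specific index pair, and in which $C$ is peaked away from that pair so as to magnify the discrepancy exactly where the inequality is fragile. A modest enumeration over log-concave sequences with small integer entries, restricted by these heuristics, should produce a counterexample quickly; once found, the verification is entirely mechanical.
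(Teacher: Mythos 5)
Your overall strategy is the right one --- the paper indeed refutes \cref{conj} by exhibiting an explicit counterexample --- but your write-up stops at being a search plan and never actually produces the object that constitutes the disproof. A disproof of a universally quantified statement is not complete until a concrete triple $(A,B,C)$ is named and the failure of \cref{ineq:def:ws} for $A*C$ and $B*C$ is verified at a specific index; ``a modest enumeration \dots\ should produce a counterexample quickly'' is a promissory note, not a proof. For comparison, the paper takes
\[
A=(1,20,200,1800),\qquad B=(1,6,30,60),\qquad C=(40,60,10,1),
\]
checks that $A\wsim B$, computes $A*C$ and $B*C$, and shows that the weak-synchronicity inequality fails at $k=2$ because
\[
(A*C)_1(B*C)_3+(A*C)_3(B*C)_1-2(A*C)_2(B*C)_2>0.
\]
Until you supply such data and the arithmetic, the argument has a genuine gap.

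That said, your heuristics are sound and match what the paper's example actually exploits: $A$ and $B$ above are weakly synchronized but not synchronized (their consecutive ratios $a_{i+1}/a_i=20,10,9$ versus $b_{i+1}/b_i=6,5,2$ diverge sharply), and $C$ is peaked at the low-index end, which shifts mass so as to break the inequality for the convolutions. If you carry out the enumeration you describe, you will land on a triple of exactly this flavor; you just need to finish the job and exhibit it.
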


We disprove \cref{conj} by providing an explicit counterexample.
This example leads us to find a relation in \cref{def:ps}, called partial synchronicity, 
between log-concave sequences, to achieve the original motivation. 
Namely, the partial synchronicity relation is (i) weaker than synchronicity, (ii) stronger than the weakly synchronicity,
and (iii) preserved by the convolution operation.
See \cref{thm:ws<ps<s,thm:main}.

\section{Preliminary and the Counterexample}

All sequences concerned in the paper consists of nonnegative numbers.
For any finite sequence $A=(a_k)_{k=0}^n$ of nonnegative numbers,
we identify the sequence $A$ with the infinite sequence $(a_k')_{k\in\mathbb{Z}}$,
where $a_k'=a_k$ for $0\le k\le n$, and $a_k'=0$ otherwise.  
Under this convenience, one may denote $A=(a_k)$ for simplicity.
We write $uA$ to denote the scalar multiple sequence $(ua_k)$, for any constant $u\ge0$.   
Let $B=(b_k)$ be another sequence of nonnegative numbers.
Then the notation $A+B$ stands for the sequence $(a_k+b_k)$.  

We call the first positive term of the sequence $A$ the {\em head} of~$A$,
and call the last positive term the {\em tail} of~$A$.
In other words, the term~$a_h$ is said to be the head of~$A$ if $a_{h-1}=0<a_h$.
In this case, we call the integer~$h$ the {\em head index} of~$A$, denoted \hbox{$h(A)=h$}.
Similarly, one may define the {\em tail index}, denoted as~$t(A)$.
It is clear that $h(A)\le t(A)$.
Without loss of generality, we suppose that $h(A)\ge 0$ for all sequences concerned in this paper.

The sequence $A$ is said to be {\em log-concave} if 
$a_k^2\ge a_{k-1}a_{k+1}$ for all integers $k$.
It is said to {\em have no internal zeros} if for any integers $i<j$ such that $a_ia_j>0$, one has $\prod_{k=i}^{j}a_k>0$.
Denote by $\LC$ the set of log-concave sequences without internal zeros.
We call the sequence consisting of only zeros the {\em zero sequence}, denoted~$(0)$.
Denote 
\[
\LC^*=\LC\setminus\{(0)\}.
\]

\begin{dfn}\label{def:sync}
Let $A=(a_k)\in\LC$ and $B=(b_k)\in\LC$.
We say that the sequences $A$ and $B$ are {\em synchronized}, denoted as $A\sim B$, 
if
\[
a_{k-1}b_{k+1}\,\le\,a_kb_k \rmand a_{k+1}b_{k-1}\,\le\,a_kb_k
\]
for all $k$.
\end{dfn}
It is obvious that scalar multiplications preserve synchronicity.  
Moreover, the synchronicity relation is reflexive, symmetric and non-transitive; see~\cite{GMTW15}.

\begin{dfn}\label{def:ws}
Let $A=(a_k)\in\LC$ and $B=(b_k)\in\LC$.
We say that the sequences $A$ and $B$ are {\em weakly synchronized}, denoted $A\wsim B$, 
if
\begin{equation}\label[ineq]{ineq:def:ws}
a_{k-1}b_{k+1}+a_{k+1}b_{k-1}\lle 2a_kb_k, 
\end{equation}
for all $k$.
\end{dfn}

For example, consider the sequences $A=(1,3,5)$ and $B=(1,4,13)$. It is easy to verify
that $A\wsim B$ and $A\not\sim B$.

Recall that if $A=(a_k)_{k=0}^m$ and $B=(b_h)_{h=0}^n$,
the convolution sequence $A*B$ is defined to be 
the coefficient sequence of the polynomial product
\[
\left(\sum_{i=0}^ma_ix^i\right)\left(\sum_{i=0}^nb_jx^j\right).
\]
The next example disproves \cref{conj}.

\begin{eg}\label[eg]{eg:c}
Let
\begin{align*}
A&\=(\,1,\,20,\,200,\,1800\,),\\
B&\=(1,\,6,\,30,\,60\,),\\
C&\=(\,40,\,60,\,10,\,1\,).
\end{align*}
It is direct to verify $A\wsim  B$ from \cref{def:ws}, and to compute that 
\begin{align*}
A*C&\=(\,40,\,860,\,9210,\,84201,\,110020,\,18200,\,1800\,),\\
B*C&\=(\,40,\,300,\,1570,\,4261,\,3906,\,630,\,60\,).
\end{align*}
Then, for the convolution sequences $A*C$ and $B*C$, \cref{ineq:def:ws} does not hold for $k=2$:
\begin{align*}
&(A*C)_1(B*C)_3+(A*C)_3(B*C)_1-2(A*C)_2(B*C)_2\\
\=&860\times 4261+84201\times 300-2\times 9210\times 1570\\
\;>\;&0.
\end{align*}
\end{eg}

\section{The Partial Synchronicity Relation}

In this section, we introduce the partial synchronicity relation between log-concave sequences,
which is expected to serve the original motivation of Gross et al.\ in~\cite{GMTW15}.

Let $A=(a_k)$ and $B=(b_k)$ be two sequences of numbers. For any integers $m$ and $n$,
we define
\begin{equation}\label[def]{def:f}
f(A,B;\ m,n)\=a_mb_n+a_nb_m.
\end{equation}
When there is no confusion, we simply denote 
\[
f(m,n)\=f(A,B;\ m,n).
\]
From \cref{def:f}, we see that the function $f(m,n)$ is commutative, namely,
\begin{equation}\label{comm:f}
f(m,n)\=f(n,m)
\end{equation}
for all integers $m$ and $n$.
For further discussion, we need the following lemma.

\begin{lem}\label{lem:trl}
Suppose that
\begin{equation}\label[ineq]{cond:ws}
f(m,n)\gge f(m+1,\,n-1)
\end{equation}
for all integers $m$ and $n$ such that $m\ge n$.
Then we have 
\begin{equation}\label[ineq]{lem:dsr}
f(a,b)\gge f(c,d)
\end{equation}
for any integers $a,b,c,d$ such that 
\begin{align}
&a+b=c+d,\qquad\text{and that}\label{eq:abcd}\\[3pt]
&|a-b|<|c-d|.\label[ineq]{ineq:abcd}
\end{align}
\end{lem}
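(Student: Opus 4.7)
The plan is to reduce to the case $a \ge b$ and $c \ge d$ by exploiting the commutativity \cref{comm:f}, and then to travel from the pair $(a,b)$ to the pair $(c,d)$ along the antidiagonal $x + y = a+b$ by iterating the one-step hypothesis \cref{cond:ws} the appropriate number of times.

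After the initial reduction the hypothesis $|a-b| < |c-d|$ becomes $a - b < c - d$, which combined with $a + b = c + d$ yields the crucial identity $c - a = b - d =: k$ with $k$ a positive integer. Because $k$ is an integer, the passage $(a,b) \to (c,d) = (a+k,\, b-k)$ decomposes into exactly $k$ successive unit steps, each of which increases the gap between the coordinates by two. The heart of the proof is then a short induction on $i \in \{0, 1, \dots, k\}$ establishing $f(a,b) \ge f(a+i,\, b-i)$, whose inductive step is a direct application of \cref{cond:ws} with $m = a+i$ and $n = b-i$; the side condition $m \ge n$ is automatic because $i \ge 0$ and $a \ge b$. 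Taking $i = k$ yields the desired inequality $f(a,b) \ge f(c,d)$.

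I anticipate no substantive obstacle. The only items requiring care are confirming that $c - a = b - d$ (which is forced by $a+b = c+d$, and hence guarantees that $k$ is a genuine positive integer rather than a half-integer) and verifying that the side condition $m \ge n$ persists throughout the induction, so that \cref{cond:ws} can indeed be invoked at each step along the way.
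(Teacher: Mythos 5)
Your proposal is correct and follows essentially the same route as the paper's own proof: reduce to $a\ge b$ and $c\ge d$ via commutativity, deduce $c-a=b-d>0$ from the two hypotheses, and iterate the one-step inequality \cref{cond:ws} exactly $c-a$ times along the antidiagonal, checking that $m\ge n$ persists at each step. No gaps.
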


\begin{proof}
Let $a,b,c,d$ be integers satisfying \cref{eq:abcd,ineq:abcd}.
In order to show \cref{lem:dsr}, one may suppose, by the commutativity \cref{comm:f}, 
that $a\ge b$ and $c\ge d$. Then \cref{ineq:abcd} reduces to
\begin{equation}\label[ineq]{ineq:abcd2}
a-b<c-d.
\end{equation}
Summing up \cref{eq:abcd,ineq:abcd2}, one obtains that 
\begin{equation}\label[ineq]{a<c}
a\;<\;c.
\end{equation}

Substituting $m=a$ and $n=b$ in the premise \cref{cond:ws}, one finds that 
\begin{equation}\label[ineq]{ineq:ab1}
f(a,b)\gge f(a+1,\,b-1).
\end{equation}
Since $a\ge b$, we have $a+1\ge b-1$. 
Therefore, in \cref{ineq:ab1},
by replacing the number~$a$ by $a+1$, and replacing $b$ by $b-1$,
we obtain that
\begin{equation}\label[ineq]{ineq:pf2}
f(a+1,\,b-1)\gge f(a+2,\,b-2).
\end{equation}
The same substitution for \cref{ineq:pf2} gives that
\[
f(a+2,\,b-2)\gge f(a+3,\,b-3).
\]
Continuing in this way, one finds 
\begin{equation}\label[ineq]{ineq:pf3}
f(a+i-1,\,b-i+1)\gge f(a+i,\,b-i)
\end{equation}
for all positive integers $i$.
Since $a<c$ from \cref{a<c}, we can sum up \cref{ineq:pf3} over $i\in\{1,2,\ldots,c-a\}$, which yields that
\[
f(a,\,b)\gge f(c,\,b-c+a).
\]
Hence, we obtain the desired \cref{lem:dsr}, by noticing $d=b-c+a$ from \cref{eq:abcd}.
\end{proof}

\begin{dfn}\label{def:ps}
Let $A, B\in\LC$. We say that the sequences $A$ and~$B$ are {\em partially synchronized}, 
denoted by $A\psim B$, 
if \cref{cond:ws}, or equivalently,
\begin{equation}\label[ineq]{ineq:def:ps}
a_mb_n+a_nb_m\gge a_{m+1}b_{n-1}+a_{n-1}b_{m+1},
\end{equation}
holds for all integers $m$ and $n$ such that $m\ge n$.
\end{dfn}

It is clear that scalar multiplications preserve partial synchronicity.  
Moreover, the partial synchronicity relation is reflexive, symmetric, and non-transitive.
The non-transitivity can be seen from the example
\[
A=(\,1,\,2,\,3\,),\qquad
B=(\,1,\,3,\,8\,),\qquad
C=(\,1,\,4,\,15\,),
\]
where $A\psim B$, $B\psim C$, and $A\not\psim C$.
In fact, this above example has been used to exemplify the non-transitivity of the synchronicity relation in \cite{GMTW15}.

The next proposition helps check quickly the weakly synchronicity of two sequences,
which is also of help in the proof of \cref{thm:ws<ps<s}.

\begin{prop}\label{prop:easycheck}
Let $A,B\in\LC^*$. Then $A\psim B$ holds iff 

\vskip 2pt\noindent
(i) $|h(A)-h(B)|\le1$;
\vskip 2pt\noindent
(ii) $|t(A)-t(B)|\le 1$; and
\vskip 2pt\noindent
(iii) \cref{ineq:def:ps} holds for all integers $m$ and $n$ such that $m\ge n$,
\[
\max\{h(A),h(B)\}\lle m\lle \max\{t(A),t(B)\}-1,
\] 
and that
\[
\min\{h(A),h(B)\}+1\lle n\lle \min\{t(A),t(B)\}.
\] 
\end{prop}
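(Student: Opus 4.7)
The plan is to establish the biconditional by an index-based case analysis, exploiting the fact that each product $a_ib_j$ appearing in \cref{ineq:def:ps} vanishes whenever one of its indices lies outside the support of the corresponding sequence. For necessity, item (iii) is immediate from \cref{def:ps}. To force (i), I would argue by contradiction: assume $h(B)\ge h(A)+2$ (the other direction is symmetric by swapping $A$ and $B$), and set $m=h(B)-1$ and $n=h(A)+1$. Then $m\ge n$, and both $b_m$ and $b_n$ vanish since their indices lie strictly below $h(B)$, so the left-hand side of \cref{ineq:def:ps} is zero. The right-hand side, however, contains the strictly positive term $a_{n-1}b_{m+1}=a_{h(A)}b_{h(B)}$, contradicting $A\psim B$. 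The argument for (ii) is analogous: assuming $t(A)\ge t(B)+2$, the choice $m=t(A)-1$, $n=t(B)+1$ isolates only the positive contribution $a_{m+1}b_{n-1}=a_{t(A)}b_{t(B)}$ on the right.

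For sufficiency, assume (i), (ii) and (iii), and fix an arbitrary pair $(m,n)$ with $m\ge n$. If $(m,n)$ already lies in the range of (iii), the inequality holds by hypothesis, so assume otherwise. The negation of the range splits into four (possibly overlapping) situations, in each of which \cref{ineq:def:ps} holds trivially. First, if $m\ge\max\{t(A),t(B)\}$, then $a_{m+1}=b_{m+1}=0$, so the right side vanishes. Second, if $n\le\min\{h(A),h(B)\}$, then $a_{n-1}=b_{n-1}=0$, again killing the right side. Third, if $m<\max\{h(A),h(B)\}$ and $n>\min\{h(A),h(B)\}$, then the chain $\min h<n\le m<\max h$ forces $\max h-\min h\ge 2$, contradicting (i), so this case is vacuous. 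Fourth, if $n>\min\{t(A),t(B)\}$, take $t(A)\le t(B)$ without loss of generality; then $m\ge n>t(A)$ annihilates $a_m$, $a_n$, and $a_{m+1}$, while (ii) gives $t(B)\le t(A)+1\le m$, which annihilates $b_{m+1}$ as well, so both sides of \cref{ineq:def:ps} are zero.

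The main bookkeeping obstacle I expect is the fourth case of the sufficiency direction, since it is the only one where a single support argument does not suffice to zero out the right side: one must combine the vanishing of the $a$-entries with the tight bound $|t(A)-t(B)|\le 1$ from (ii) to kill $b_{m+1}$ as well. Every other case reduces to a direct comparison of indices against $h(A),h(B),t(A),t(B)$ and the defining property of the head and tail.
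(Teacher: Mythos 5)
Your proof is correct and follows essentially the same route as the paper's: the necessity argument uses the identical choices $m=h(B)-1$, $n=h(A)+1$ (and the analogous pair for the tails), and the sufficiency argument is the same support-vanishing case analysis, with the head-side overflow case ruled out by contradiction with (i) exactly as in the paper. The only immaterial difference is the case $n>\min\{t(A),t(B)\}$, which the paper makes vacuous by intersecting with $m\le\max\{t(A),t(B)\}-1$ to contradict (ii), whereas you verify directly that (ii) forces both sides of the inequality to vanish there; both are valid.
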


\begin{proof}
Let $A=(a_k)$ and $B=(b_k)$ be sequences such that $A,B\in\LC^*$.

\medskip
\noindent{\bf Necessity.} Suppose that $A\psim B$, i.e., \cref{ineq:def:ps} holds for all integers~$m$ and $n$ such that $m\ge n$.

In order to show (i), one may suppose that $h(A)\le h(B)$ without loss of generality.
Assume that $|h(A)-h(B)|\ge 2$. 
Take 
\[
m=h(B)-1
\rmand
n=h(A)+1.
\]
It follows that $m\ge n$, and that \cref{ineq:def:ps} becomes
\begin{equation}\label[ineq]{pf8}
a_{h(B)-1}b_{h(A)+1}+a_{h(A)+1}b_{h(B)-1}
\ge 
a_{h(B)}b_{h(A)}+a_{h(A)}b_{h(B)}.
\end{equation}
From definition of the head $h(B)$, we have 
\begin{equation}\label{pf11}
b_{h(B)-1}=0.
\end{equation}
On the other hand, since $h(B)-h(A)\ge 2$, we have 
\begin{equation}\label{pf10}
b_{h(A)}=0
\rmand
b_{h(A)+1}=0.
\end{equation}
Substituting \cref{pf10,pf11} into \cref{pf8},
one obtains 
\begin{equation}\label[ineq]{pf9}
0\gge a_{h(A)}b_{h(B)}.
\end{equation}
From the definition of the head function $h$,
one sees that $a_{h(A)}>0$ and $b_{h(B)}>0$,
contradicting \cref{pf9}.

Condition (ii) can be shown along the same lines. 
The necessity of (iii) is obvious from the premise $A\psim B$.

\medskip
\noindent{\bf Sufficiency.}
For convenience, we denote
\begin{align*}
mh=\min\{h(A),h(B)\},\,&\qquad\,
mt=\min\{t(A),t(B)\},\\[4pt]
Mh=\max\{h(A),h(B)\},&\qquad
Mt=\max\{t(A),t(B)\}.
\end{align*}
If $m\ge Mt$, then $a_{m+1}=b_{m+1}=0$, and thus 
\begin{equation}\label{pf18}
f(m+1,\,n-1)\=0.
\end{equation}
Since $f(m,n)\ge0$, \cref{cond:ws} holds.
Below we can suppose that 
\begin{equation}\label[ineq]{pf19}
m\lle Mt-1.
\end{equation}
In another case that $n\le mh$, we have $a_{n-1}=b_{n-1}=0$.
Therefore, we infer \cref{pf18}, which allows us to suppose without loss of generality that 
\begin{equation}\label[ineq]{pf20}
n\gge mh+1.
\end{equation}

In view of (iii), \cref{pf19,pf20}, 
it suffices to prove \cref{cond:ws} for all integers~$m$ and $n$ such that $m\ge n$, and that 
either $m\le Mh-1$ or $n\ge mt+1$.

When $m\le Mh-1$, by using \cref{pf20}, one may deduce that 
\[
Mh-1\gge m\gge n\gge mh+1,
\]
contradicting Condition (i), which implies that $Mh-mh\le 1$.
When $n\ge mt+1$, by using \cref{pf19}, we can derive that 
\[
mt+1\lle n\lle m\lle Mt+1,
\]
contradicting Condition (ii), which implies that $Mt-mt\le 1$.
This completes the proof.
\end{proof}

Now we can clarify the relations among the synchronicity,
the weak synchronicity, and the partial synchronicity.

\begin{thm}\label{thm:ws<ps<s}
The partial synchronicity relation $\psim$ is weaker than synchronicity~$\sim$,
and stronger than the weakly synchronicity~$\wsim$. In other words,
any two synchronized log-concave sequences without internal zeros are partially synchronized,
and any two partially synchronized log-concave sequences without internal zeros are weakly synchronized.
\end{thm}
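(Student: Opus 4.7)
The claim decomposes into two implications. The implication $\psim \Rightarrow \wsim$ is the easy one; setting $m=n=k$ in \cref{ineq:def:ps} immediately yields $2a_kb_k \ge a_{k+1}b_{k-1}+a_{k-1}b_{k+1}$, which is exactly \cref{ineq:def:ws}.

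For the substantive implication $\sim \Rightarrow \psim$, let $A,B\in\LC^*$ with $A\sim B$. The plan is to invoke \cref{prop:easycheck} and verify its three conditions. Conditions~(i) and~(ii), namely $|h(A)-h(B)|\le1$ and $|t(A)-t(B)|\le1$, should follow from a direct reading of synchronicity: if, say, $h(B)\ge h(A)+2$, then applying synchronicity at the index $k=h(B)-1$ yields $a_{h(B)-2}\,b_{h(B)} \le a_{h(B)-1}\,b_{h(B)-1}=0$, forcing $a_{h(B)-2}=0$; combined with the positivity of $a_{h(A)}$ this pins the heads within distance one. The tail case is symmetric.

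The core step is verifying condition~(iii), namely
\[
a_mb_n+a_nb_m \gge a_{m+1}b_{n-1}+a_{n-1}b_{m+1}
\]
for $m\ge n$ in the middle range supplied by \cref{prop:easycheck}. For $m=n$ this reduces to $2a_nb_n \ge a_{n+1}b_{n-1}+a_{n-1}b_{n+1}$, which is the sum of the two defining inequalities of synchronicity at $k=n$. For $m>n$ the plan is to establish the stronger term-by-term inequalities
\[
a_mb_n \gge a_{m+1}b_{n-1} \rmand a_nb_m \gge a_{n-1}b_{m+1}.
\]
The first follows by chaining synchronicity at $k=m$, in the form $a_{m+1}/a_m \le b_m/b_{m-1}$, with the log-concavity of $B$, which makes the ratio $b_j/b_{j-1}$ non-increasing in $j$ along the support, giving $b_m/b_{m-1} \le b_n/b_{n-1}$ for $n\le m$; chaining these yields $a_{m+1}b_{n-1}\le a_mb_n$. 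The second inequality is analogous, using synchronicity at $k=n$ and the monotonicity of the ratios $b_{j+1}/b_j$ as $j$ runs from $n$ to $m$.

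The main obstacle will be managing boundary positions where one of $a_{m+1},a_{n-1},b_{n-1},b_{m+1}$ vanishes, since the ratio manipulations require positive denominators. The plan to overcome this is to first use conditions~(i) and~(ii) of \cref{prop:easycheck} to ensure that $a_m,a_n,b_m,b_n$ are all positive in the relevant range, and then treat each zero boundary term by an ad hoc sub-case: whenever a factor on the right-hand side vanishes, the corresponding separate inequality becomes trivial, while the other one is still accessible through the ratio argument (or also trivial) since its denominators are either positive or force both sides to be zero.
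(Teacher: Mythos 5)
Your proof of $\psim\Rightarrow\wsim$ and the core of your proof of $\sim\Rightarrow\psim$ coincide with the paper's. The paper likewise reduces via \cref{prop:easycheck}~(iii) to the range where the relevant terms are nonzero, divides \cref{ineq:def:ps} by $a_{m+1}b_{m+1}$, and proves it by establishing exactly your two term-by-term inequalities $a_mb_n\ge a_{m+1}b_{n-1}$ and $a_nb_m\ge a_{n-1}b_{m+1}$ (there written as $\beta_n\ge\alpha_{m+1}$ and $\alpha_n\ge\beta_{m+1}$ with $\alpha_k=a_k/a_{k-1}$, $\beta_k=b_k/b_{k-1}$), each by chaining one synchronicity inequality at a single index with the monotonicity of consecutive ratios supplied by log-concavity. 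The only cosmetic difference is which index you apply synchronicity at ($k=m$ versus the paper's $k=n$); both chains are valid.

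One substantive remark. You attempt to verify conditions (i) and (ii) of \cref{prop:easycheck}, which the paper's proof silently omits even though its appeal to that proposition requires them. Your derivation of $a_{h(B)-2}=0$ from synchronicity at $k=h(B)-1$ is correct, but the conclusion "combined with the positivity of $a_{h(A)}$ this pins the heads within distance one" needs $t(A)\ge h(B)-2$ in order to contradict the absence of internal zeros. If the supports of $A$ and $B$ are far apart the argument gives nothing, and for good reason: taking $a_0=1$ and $b_3=1$ as the only nonzero entries, both sequences lie in $\LC^*$ and are vacuously synchronized under \cref{def:sync}, yet condition (i) fails and \cref{ineq:def:ps} fails at $m=2$, $n=1$. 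So the implication $\sim\Rightarrow\psim$ breaks down in this degenerate case; neither your sketch nor the paper's proof can close that gap without an extra hypothesis guaranteeing that the supports overlap (or are within distance one), which is implicit in the intended applications. Apart from exposing this shared edge case, your boundary-case plan is sound and your argument matches the paper's.
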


\begin{proof}
Taking $m=n=k$ in \cref{ineq:def:ps} gives \cref{ineq:def:ws},
which implies that the partial synchronicity~$\psim$ is stronger than weakly synchronicity~$\wsim$.

Let $A=(a_k)\in\LC$ and $B=(b_k)\in\LC$. 
Let $m\ge n$. It suffices to show \cref{ineq:def:ps}.
By \cref{prop:easycheck} (iii), we can suppose that 
\[
m\lle \max\{t(A),\,t(B)\}-1
\rmand
n\gge\min\{h(A),\,h(B)\}+1.
\]
Thus we have $a_{m+1}b_{m+1}\ne 0$ and $a_nb_n\ne0$. 
Since $A,B\in\LC$, neither of the sequences $A$ and $B$ has internal zeros.
It follows that 
\[
\prod_{i=n}^{m+1}(a_ib_i)\ne 0.
\]
By dividing \cref{ineq:def:ps} by the factor $a_{m+1}b_{m+1}$, 
we see that it is equivalent to prove
\begin{equation}\label[ineq]{dsr:100}
\frac{a_mb_n}{a_{m+1}b_{m+1}}+\frac{a_nb_m}{a_{m+1}b_{m+1}}
\;\ge\;
\frac{b_{n-1}}{b_{m+1}}+\frac{a_{n-1}}{a_{m+1}}.
\end{equation}
Following the notation in \cite{GMTW15}, we let
\[
\alpha_k=\frac{a_k}{a_{k-1}}
\rmand
\beta_h=\frac{b_h}{b_{h-1}},
\]
when $a_{k-1}\ne0$ and $b_{h-1}\ne0$.
Then the desired \cref{dsr:100} can be recast as
\begin{equation}\label[ineq]{dsr:200}
\frac{1}{\alpha_{m+1}}\prod_{i=n+1}^{m+1}\frac{1}{\beta_i}
+\frac{1}{\beta_{m+1}}\prod_{i=n+1}^{m+1}\frac{1}{\alpha_i}
\;\ge\;
\prod_{i=n}^{m+1}\frac{1}{\beta_i}+\prod_{i=n}^{m+1}\frac{1}{\alpha_i}.
\end{equation}
Multiplying \cref{dsr:200} by the product $\prod_{i=n}^{m+1}(\alpha_i\beta_i)$,
we find to show the following inequality is sufficient:
\[
\beta_n\prod_{i=n}^m\alpha_{i}+\alpha_n\prod_{i=n}^m\beta_{i}
\;\ge\;
\prod_{i=n}^{m+1}\alpha_i+\prod_{i=n}^{m+1}\beta_i.
\]
That is, it suffices to show that 
\begin{equation}\label[ineq]{dsr:300}
(\beta_n-\alpha_{m+1})\prod_{i=n}^m\alpha_{i}+
(\alpha_n-\beta_{m+1})\prod_{i=n}^m\beta_{i}
\;\ge\;0.
\end{equation}
On the other hand, the synchronicity relation $A\sim B$ implies that 
\[
\alpha_n\ge \beta_{n+1}
\rmand
\beta_n\ge \alpha_{n+1}.
\]
By the log-concavity of the sequence $B$, the sequence $\beta_k$ is decreasing. Thus we have
\begin{equation}\label[ineq]{pf:100}
\alpha_n\ge\beta_{n+1}\ge\beta_{m+1}.
\end{equation}
For the same reason, we have 
\begin{equation}\label[ineq]{pf:200}
\beta_n\ge\alpha_{m+1}.
\end{equation}
In view of \cref{pf:100,pf:200}, the desired \cref{dsr:300} follows immediately.
This completes the proof.
\end{proof}

Gross et al.~\cite[Theorems 2.10, 2.11]{GMTW15} showed that 
any collection of pairwise synchronized sequences is closed under linear combinations with nonnegative coefficients,
and that the same property holds for the weak synchronicity relation.
We show that partial synchronicity behaves in the same manner in \cref{lem:WS:LC,thm:lincomb}.

\begin{lem}\label{lem:WS:LC}
Let $A,B\in\LC$ such that $A\psim B$.
Then we have $uA+vB\in\LC$ for all nonnegative numbers $u$ and $v$.
\end{lem}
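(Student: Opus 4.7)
The plan is to split the verification into two parts: the pointwise log-concavity inequality for $S:=uA+vB$, and the absence of internal zeros in $S$. First I would dispense with the trivial cases where $u=0$, $v=0$, or one of $A,B$ is the zero sequence, since each reduces $S$ to a scalar multiple of an element of $\LC$. So henceforth suppose $u,v>0$ and $A,B\in\LC^*$.

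For the pointwise log-concavity, write $s_k=ua_k+vb_k$ and expand
\[
s_k^2-s_{k-1}s_{k+1} \= u^2(a_k^2-a_{k-1}a_{k+1}) + v^2(b_k^2-b_{k-1}b_{k+1}) + uv\bigl(2a_kb_k-a_{k-1}b_{k+1}-a_{k+1}b_{k-1}\bigr).
\]
The first two summands are nonnegative by the log-concavity of $A$ and $B$. For the third summand, I would invoke \cref{thm:ws<ps<s} to conclude $A\wsim B$, that is, \cref{ineq:def:ws}. Hence $s_k^2\gge s_{k-1}s_{k+1}$ for every integer $k$.

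For the absence of internal zeros, since $u,v>0$, the support of $S$ is the union of the supports of $A$ and $B$, and since $A,B$ have no internal zeros, those supports are the integer intervals $[h(A),t(A)]$ and $[h(B),t(B)]$. By \cref{prop:easycheck} we have $|h(A)-h(B)|\lle 1$ and $|t(A)-t(B)|\lle 1$. A short case check — WLOG $h(A)\le h(B)$, giving $h(B)\lle h(A)+1\lle t(A)+1$, and symmetrically at the upper end — shows the two intervals overlap or are adjacent, so their union is again a single interval. Thus $S\in\LC$.

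The main obstacle is not computational; both steps are brief once the earlier results are cited. The subtle point worth watching is that membership in $\LC$ requires no internal zeros in addition to the log-concave inequality, so \cref{prop:easycheck} is genuinely needed — without the head/tail control it supplies, gluing two intervals with potentially large gap would destroy the no-internal-zeros condition even though the pointwise log-concavity would still hold trivially at those zero coordinates.
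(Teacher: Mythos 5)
Your proof is correct, and its core computation is the same as the paper's: expand $s_k^2-s_{k-1}s_{k+1}$ into the $u^2$, $v^2$, and $uv$ pieces, handle the first two by log-concavity of $A$ and $B$, and handle the cross term by the $m=n=k$ case of partial synchronicity (your citation of \cref{thm:ws<ps<s} and the paper's direct specialization of \cref{ineq:def:ps} are the same one-line observation, and there is no circularity since that theorem is proved independently). The genuine difference is that you also verify the no-internal-zeros half of membership in $\LC$, via \cref{prop:easycheck} and the observation that for $u,v>0$ the support of $uA+vB$ is the union of two integer intervals whose left endpoints differ by at most $1$. The paper's proof stops at the inequality $(ua_k+vb_k)^2\ge(ua_{k+1}+vb_{k+1})(ua_{k-1}+vb_{k-1})$ and never addresses internal zeros, so your version is strictly more complete; the extra step is cheap but, as you note, not vacuous, since $\LC$ is defined to exclude internal zeros and the pointwise inequality alone would not rule them out.
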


\begin{proof}
Let $A=(a_k)$ and $B=(b_k)$ be log-concave sequences such that $A\psim B$.
Let $u,v\ge 0$.
Since the sequence $A$ is log-concave, we have 
\begin{equation}\label[ineq]{pf:LCA}
u^2a_k^2\gge u^2a_{k-1}a_{k+1}. 
\end{equation}
For the same reason, the log-concavity of the sequence $B$ implies that 
\begin{equation}\label[ineq]{pf:LCB}
v^2b_k^2\gge v^2b_{k-1}b_{k+1}. 
\end{equation}
Since $A\psim B$,
one may take $m=n=k$ in \cref{ineq:def:ps}, which yields
\begin{equation}\label[ineq]{pf:wsAB}
uv(a_kb_k+a_kb_k)\gge uv(a_{k+1}b_{k-1}+a_{k-1}b_{k+1}).
\end{equation}
Adding \cref{pf:LCA,pf:LCB,pf:wsAB} up, we obtain that
\[
(ua_k+vb_k)^2\gge (ua_{k+1}+vb_{k+1})(ua_{k-1}+vb_{k-1}).
\]
In other words, the sequence $uA+vB$ is log-concave.
\end{proof}

\begin{thm}\label{thm:lincomb}
Suppose that the sequences $A_1,A_2,\ldots,A_n$ are pairwise partially synchronized.
Then for any nonnegative numbers $u_1$, $v_1$, $u_2$, $v_2$, $\ldots$, $u_n$, $v_n$, we have
$\sum_{i=1}^n u_i A_i\psim\sum_{i=1}^n v_i A_i$.
\end{thm}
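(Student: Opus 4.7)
The plan is to reduce the claim to two checks on $P = \sum_{i=1}^n u_i A_i$ and $Q = \sum_{i=1}^n v_i A_i$: first, that both belong to $\LC$ so that $\psim$ is even defined for them; second, that they satisfy the defining inequality of partial synchronicity for every $m \ge n$. The second check is a bilinear expansion, while the first has a subtle internal-zeros piece that I flag as the main obstacle.

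For the defining inequality, I would expand and collect. Writing $A_i = (a^{(i)}_k)$ and using the bilinear function $f$ of Section~3, for $m \ge n$ one obtains
\[
(p_m q_n + p_n q_m) - (p_{m+1} q_{n-1} + p_{n-1} q_{m+1}) = \sum_{i,j} u_i v_j \bigl[\,f(A_i, A_j;\, m, n) - f(A_i, A_j;\, m+1, n-1)\,\bigr].
\]
Each bracket is nonnegative by the hypothesis $A_i \psim A_j$ (with $i = j$ covered by reflexivity of $\psim$), and $u_i v_j \ge 0$, so the sum is $\ge 0$. This delivers $P \psim Q$ once the first check is in hand.

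For $P \in \LC$ (and $Q$ similarly), log-concavity follows from the expansion
\[
p_k^2 - p_{k-1} p_{k+1} = \sum_i u_i^2 \bigl((a^{(i)}_k)^2 - a^{(i)}_{k-1} a^{(i)}_{k+1}\bigr) + \sum_{i<j} u_i u_j \bigl(2 a^{(i)}_k a^{(j)}_k - a^{(i)}_{k-1} a^{(j)}_{k+1} - a^{(i)}_{k+1} a^{(j)}_{k-1}\bigr),
\]
whose first sum is nonnegative by the log-concavity of each $A_i$, and whose second sum is nonnegative by the $m = n = k$ specialization of $A_i \psim A_j$; this is exactly the two-sequence calculation of Lemma~\ref{lem:WS:LC} applied termwise. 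The harder point, which I flag as the main obstacle, is the absence of internal zeros. I would argue by contradiction: if $p_k = 0$ for some $k$ with $h(P) \le k \le t(P)$, then there exist indices $i_1, i_2$ with $u_{i_1}, u_{i_2} > 0$, $h(A_{i_1}) \le k \le t(A_{i_2})$, and $a^{(i_1)}_k = a^{(i_2)}_k = 0$. Since the individual $A_{i_\ell}$ have no internal zeros, this forces $t(A_{i_1}) \le k-1$ and $h(A_{i_2}) \ge k+1$, whence $h(A_{i_2}) - t(A_{i_1}) \ge 2$. But Proposition~\ref{prop:easycheck}(i) applied to $A_{i_1} \psim A_{i_2}$ gives $h(A_{i_2}) \le h(A_{i_1}) + 1 \le t(A_{i_1}) + 1$, the desired contradiction. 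The same argument settles $Q$ and finishes the plan.
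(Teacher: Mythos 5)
Your argument is correct, and its engine is the same as the paper's: the defining inequality of $\psim$ is bilinear in the pair of sequences, so it survives nonnegative linear combinations once each cross term is controlled by the pairwise hypothesis. The paper packages this differently — it first notes that scalars preserve $\psim$, then reduces by iteration to the single claim that $A+B\psim C$ whenever $A\psim C$ and $B\psim C$, which it proves by adding the two instances of \cref{ineq:def:ps} and citing \cref{lem:WS:LC} for membership in $\LC$. Your one-shot double sum $\sum_{i,j}u_iv_j\bigl[f(A_i,A_j;m,n)-f(A_i,A_j;m+1,n-1)\bigr]$ collapses that iteration into one display, at the small cost of needing the diagonal terms $i=j$, i.e.\ reflexivity of $\psim$, which the paper asserts and which does follow from log-concavity together with the absence of internal zeros. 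The genuinely valuable difference is your treatment of internal zeros. The paper's \cref{lem:WS:LC} claims $uA+vB\in\LC$, but its proof verifies only the log-concavity inequality and is silent on the no-internal-zeros half of membership in $\LC$, which is not implied by log-concavity alone (the sequence $(1,0,0,1)$ is log-concave). Your contradiction argument — an internal zero of $P$ would force some summand's tail to sit at least two places before another summand's head, violating \cref{prop:easycheck}(i) applied to $A_{i_1}\psim A_{i_2}$ (and violating $h\le t$ outright if $i_1=i_2$) — closes exactly that gap, since it shows the supports of the contributing summands overlap or abut and hence their union is an interval. So your proof is not only valid but slightly more complete than the one in the paper.
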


\begin{proof} 
Since scalars preserve the weakly synchronicity relation, 
we see that the $2n$ sequences $u_iA_i$ and~$v_iA_i$ are pairwise partially synchronized.  
By iterative application, it suffices to show that summation preserves partial synchronicity.
Namely, given $A,B,C\in\LC^*$,
we only need to show that $A+B\psim C$ if $A\psim C$ and $B\psim C$.

Let $m$ and $n$ be integers such that $m\ge n$.
The condition $A\psim C$ implies that
\begin{equation}\label[ineq]{pf2:AC}
a_mc_n+a_nc_m\gge a_{m+1}c_{n-1}+a_{n-1}c_{m+1}.
\end{equation}
The condition $B\psim C$ implies that 
\begin{equation}\label[ineq]{pf2:BC}
b_mc_n+b_nc_m\gge b_{m+1}c_{n-1}+b_{n-1}c_{m+1}.
\end{equation}
Adding \cref{pf2:AC,pf2:BC} up, one obtains that
\[
(a_m+b_m)c_n+(a_n+b_n)c_m\gge (a_{m+1}+b_{m+1})c_{n-1}+(a_{n-1}+b_{n-1})c_{m+1}.
\]
On the other hand, the sequence $A+B$ is log-concave by \cref{lem:WS:LC}.
Hence, we find $A+B\psim C$.
This completes the proof.
\end{proof}

In \cite[Theorem 2.12]{GMTW15}, Gross et al.\ also showed that the synchronicity relation 
is preserved by the sequence convolution operation.
\Cref{eg:c} illustrates that this property does not hold for weak synchronicity.
Below we demonstrate that the same property holds for partial synchronicity.

\begin{thm}\label{thm:main}
Let $A,B,C\in\LC^*$. If $A\psim B$,
then $A*C\psim B*C$.
\end{thm}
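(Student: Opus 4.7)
The plan is to expand the convolutions into a bilinear sum in $f(A,B;\cdot,\cdot)$ and, after slicing by the sum of the summation indices, reduce the target inequality to a pairing argument combining two monotonicity facts provided by \cref{lem:trl}: one coming from $A\psim B$, the other from $C\psim C$, where the latter is a restatement of the log-concavity of $C$.

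Direct expansion, followed by the use of the commutativity \cref{comm:f} of $f$ to fold $a_ib_j$ together with $a_jb_i$, yields
\[
(A*C)_m(B*C)_n+(A*C)_n(B*C)_m\=\sum_{i,j}c_{m-i}c_{n-j}\,f(A,B;\,i,j),
\]
and the analogous identity with $(m,n)$ replaced by $(m+1,n-1)$. Subtracting, the inequality to prove becomes
\[
\sum_{i,j}\bigl[c_{m-i}c_{n-j}-c_{m+1-i}c_{n-1-j}\bigr]\,f(A,B;\,i,j)\gge 0.
\]
Setting $\delta=m-n\ge 0$, I slice this sum by $s=i+j$ and, for each fixed $s$, parametrize the pairs $(i,j)$ with $i+j=s$ by $r=i-j$. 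Writing $S=m+n-s$, $G(r)=f(A,B;(s+r)/2,(s-r)/2)$, and $H(t)=c_{(S+t)/2}c_{(S-t)/2}$, the slice-$s$ contribution takes the form
\[
\sum_r\bigl[H(\delta-r)-H(\delta+2-r)\bigr]\,G(r).
\]

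Both $G$ and $H$ are even functions of their arguments by \cref{comm:f}, and \cref{lem:trl}, applied to $A\psim B$ and to $C\psim C$ respectively, shows that each is nonincreasing in the absolute value of its argument. The decisive step is to pair $r$ with its reflection $r'=2\delta+2-r$ about $\delta+1$: the evenness of $H$ makes the bracket at $r'$ the negative of the bracket at $r$, so the pair's contribution collapses to $[H(\delta-r)-H(\delta+2-r)]\,[G(r)-G(r')]$. Substituting $r=\delta+1-t$ and $r'=\delta+1+t$ with $t\ge 0$, this equals $[H(t-1)-H(t+1)]\,[G(r)-G(r')]$; the first factor is nonnegative by the monotonicity of $H$, and $|r|=|\delta+1-t|\le\delta+1+t=|r'|$ gives $G(r)\ge G(r')$ by the monotonicity of $G$. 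Hence every pair contributes nonnegatively, the fixed point $r=\delta+1$ (when it has the parity of $s$) contributes zero, and summing over $s$ yields the desired inequality; that $A*C,B*C\in\LC^*$ is a standard fact about convolutions of log-concave sequences without internal zeros. The main obstacle is to identify the correct center $\delta+1$ for the pairing, namely the midpoint between the index differences $\delta$ and $\delta+2$, so that the sign-flip of the bracket under the reflection aligns with the monotonicities of both $G$ and $H$; once this is seen, \cref{lem:trl} is precisely the tool that lifts the one-variable partial synchronicities into the two-variable monotonicities required.
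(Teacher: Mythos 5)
Your proof is correct, and although it shares the paper's overall skeleton---expand the convolutions, slice the resulting sum by the value of the index sum, and invoke \cref{lem:trl}---the way each slice is handled is genuinely different. The paper groups the expansion by the products $c_kc_l$, so that the target becomes $\sum_{k,l}g(k,l)\,c_kc_l\ge0$ with $g(k,l)=f(m-k,n-l)-f(m+1-l,n-1-k)$; it then performs an Abel summation against the differences $c_{s-k}c_{s+k}-c_{s-k-1}c_{s+k+1}$ (nonnegative by the log-concavity of $C$), telescopes the resulting partial sums of $g$ using the antisymmetry $g(k,l)+g(l-1,k+1)=0$ down to a single term, and finishes with one application of \cref{lem:trl} to $A\psim B$, showing $g(k,l)\ge0$ for $k\ge l$. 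You instead keep $f(i,j)$ as the ``variable'', write each slice as $\sum_r[H(\delta-r)-H(\delta+2-r)]G(r)$, and exploit the involution $r\mapsto 2\delta+2-r$, under which the $H$-bracket changes sign; each orbit then contributes a product of two differences, each nonnegative by \cref{lem:trl} applied once to $A\psim B$ (monotonicity of $G$ in $|r|$) and once to $C\psim C$, i.e.\ the reflexivity of $\psim$ on $\LC$ (monotonicity of $H$ in $|t|$). The two computations are essentially transposes of one another: the paper's telescoping identity plays exactly the role of your sign-flip under reflection, and both ultimately rest on the same one-step consequence $c_ac_b\ge c_{a+1}c_{b-1}$ of the log-concavity of $C$. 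What your version buys is a uniform treatment of both parities of the index sum and a more visibly symmetric argument in which both factors of each orbit's contribution are nonnegative for the same structural reason; the price is having to identify the correct reflection center $\delta+1$, just as the paper must discover its telescoping identity. Your peripheral checks (the folding identity for the expansion via \cref{comm:f}, the parity bookkeeping, the vanishing of the fixed-point term, the strictness needed in \cref{lem:trl}, and the standard fact that $A*C,B*C\in\LC^*$) are all sound.
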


\begin{proof}
Suppose that $A=(a_k)$, $B=(b_k)$, and $C=(c_k)$.
Since all the sequences $A$, $B$ and $C$ are log-concave without internal zeros,
so are the sequences $A*C$ and $B*C$.
Let~$m$ and $n$ be integers such that $m\ge n$.
From \cref{def:ps} of weakly synchronization, it suffices to show that
\[
f(A*C,\,B*C;\ m,\,n)\ge f(A*C,\,B*C;\ m+1,\,n-1).
\]
that is,
\begin{multline}\label[ineq]{ineq:dsr:main}
(A*C)_m(B*C)_n+(A*C)_n(B*C)_m\\
\gge (A*C)_{m+1}(B*C)_{n-1}+(A*C)_{n-1}(B*C)_{m+1},
\end{multline}
where the notation $S_n$ for a sequence $S$ denotes the $n$th term of $S$.

We consider each summand in \cref{ineq:dsr:main} as a linear combination of the products of form $c_kc_l$,
where $k<l$ are integers. 
Then the coefficient of $c_kc_l$ in the expansion of the first summand $(A*C)_m(B*C)_n$ is 
\[
a_{m-k}b_{n-l}+a_{m-l}b_{n-k}.
\]
Dealing with the second summand $(A*C)_n(B*C)_m$ by exchanging the numbers $m$ and $n$ in the above expression, 
we find the coefficient of $c_kc_l$ of 
the left hand side of \cref{ineq:dsr:main} is
\begin{multline}\label{pf:coeffL}
(a_{m-k}b_{n-l}+a_{m-l}b_{n-k})+(a_{n-k}b_{m-l}+a_{n-l}b_{m-k})\\
\=f(m-k,\,n-l)+f(m-l,\,n-k).
\end{multline}
In \cref{pf:coeffL}, replacing $m$ by $m+1$, and replacing $n$ by $n-1$,
we find that the coefficient of $c_kc_l$ of the right hand side of \cref{ineq:dsr:main} is
\[
f(m+1-k,\,n-1-l)+f(m+1-l,\,n-1-k).
\]
In the same way, one may check that the coefficients of $c_k^2$ in the two sides of \cref{ineq:dsr:main} 
are respectively
\[
f(m-k,\,n-k)\rmand f(m+1-k,\,n-1-k).
\]
To sum up, we can recast the desired \cref{ineq:dsr:main} in terms of the function $f$ as 
\begin{align}
&\sum_{k<l}\,[f(m-k,\,n-l)+f(m-l,\,n-k)]\cdot c_kc_l\label[ineq]{dsr1}\\
-\;&\sum_{k<l}\,[f(m+1-k,\,n-1-l)+f(m+1-l,\,n-1-k)]\cdot c_kc_l\notag\\
+\;&\sum_{k}\,[f(m-k,\,n-k)-f(m+1-k,\,n-1-k)]\cdot c_k^2\gge 0,\notag
\end{align}
where the indices of every summation run over, in fact, a finite number of integers
(since the number of non-zero terms in the sequence $C$ is finite).
We omit the range of such indices, and adopt this simplicity convention throughout this paper.

We define
\begin{equation}\label[def]{def:g}
g(k,l)\=f(m-k,\,n-l)-f(m+1-l,\,n-1-k).
\end{equation}
Then the desired \cref{dsr1} can be written simply as
\[
\sum_{k<l}[g(k,l)+g(l,k)]c_kc_l
+\sum_kg(k,k)c_k^2\gge 0,
\]
that is,
\begin{equation}\label[ineq]{dsr2}
\sum_{k,\,l}g(k,l)c_kc_l\gge 0.
\end{equation}
Let $s$ be a nonnegative integer, indicating the sum $k+l$ of the indices.
For notation simplicity, we define
\begin{align}
h_\e(k)&\=g(k,\,2s-k),\label[def]{def:h:even}\\[3pt]
h_\o(k)&\=g(k,\,2s+1-k),\label[def]{def:h:odd}
\end{align}
where the subscript letter ``e'' indicates that the sum $2s$ of the two variates $k$ and $2s-k$ in \cref{def:h:even} is an even integer,
and the subscript letter ``o'' indicates ``odd''.
By virtue of these notation, the desired \cref{dsr2} can be recast as
\[
\sum_{s,\,k}[h_\e(k)c_kc_{2s-k}+h_\o(k)c_kc_{2s+1-k}]\gge 0.
\]
Thus, it suffices to show, for all integers $s\ge 0$, that 
\begin{equation}\label[ineq]{dsr:h:even}
\sum_kh_\e(k)c_kc_{2s-k}\gge 0
\end{equation}
and that
\begin{equation}\label[ineq]{dsr:h:odd}
\sum_kh_\o(k)c_kc_{2s+1-k}\gge 0.
\end{equation}
We shall show them individually. Let $s\ge 0$.

We transform the left hand sides of the desired \cref{dsr:h:even} as
\begin{equation}\label{dsr1:even}
\sum_kh_\e(k)c_kc_{2s-k}
\=\sum_{k=0}^{s}(c_{s-k}c_{s+k}-c_{s-k-1}c_{s+k+1})\!\!\!\!\sum_{i=s-k-1}^{s+k+1}h_\e(i).
\end{equation}
Since the sequence $C$ is log-concave, we infer that 
\[
c_{s-k}c_{s+k}-c_{s-k-1}c_{s+k+1}\gge0.
\]
Thus, in view of \cref{dsr1:even}, the desired \cref{dsr:h:even} holds if 
\[
\sum_{i=s-k-1}^{s+k+1}h_\e(i)\gge0\qquad\text{for all $0\le k\le s$}.
\]

Now, let us reduce the left hand side $\sum_{i=s-k-1}^{s+k+1}h_\e(i)$.
From \cref{def:g} of the function~$g$, it is straightforward to verify that 
\begin{equation}\label{eq:g}
g(k,l)+g(l-1,\,k+1)=0.
\end{equation}
Taking $k=s-i$ and $l=s+i$, \cref{eq:g} becomes
\begin{equation}\label{pf:10}
g(s-i,\,s+i)+g(s+i-1,\,s-i+1)\=0.
\end{equation}
By \cref{def:h:even} of the function $h_\e$, \cref{pf:10} can be rewritten as
\begin{equation}\label{eq:h:even}
h_\e(s-i)+h_\e(s+i-1)\=0.
\end{equation}
By using \cref{eq:h:even}, we can simplify
\begin{align*}
\sum_{i=s-k-1}^{s+k+1}h_\e(i)&\=h_\e(s+k+1).
\end{align*}
Thereby to confirm the desired \cref{dsr:h:even}, 
it suffices to show $h_\e(s+k+1)\ge 0$, that is, $g(s+k+1,\,s-k-1)\ge 0$.
To do this, we will prove a stronger result that 
\begin{equation}\label[ineq]{ineq:g}
g(k,l)\ge 0\qquad\text{for all $k\ge l$}.
\end{equation}
On the way using  \cref{lem:trl}, one needs to check three conditions.
First, the sequences~$A$ and $B$ are partially synchronized as in the premise.
Second, the sum of variates of the functions 
\[
f(m-k,\,n-l)
\rmand
f(m+1-l,\,n-1-k)
\] 
are equal, i.e.,
\[
(m-k)+(n-l)\=(m+1-l)+(n-1-k).
\]
Third, since $m\ge n$ and $k\ge l$, the distances of variates are comparative as 
\[
|(m-k)-(n-l)|\lle m-n\;<\;|(m+1-l)-(n-1-k)|.
\]
By \cref{lem:trl}, we deduce the claimed \cref{ineq:g}.

When $k\ge 0$, we have $s+k+1\ge s-k-1$, and hence by \cref{ineq:g},
\[
h_\e(s+k+1)\=g(s+k+1,\,s-k-1)\gge0.
\]
This completes the proof of \cref{dsr:h:even}.

\Cref{dsr:h:odd} can be shown along the same lines.
In fact,
\[
\sum_kh_\o(k)c_kc_{2s+1-k}
\=\sum_{k=0}^s(c_kc_{2s+1-k}-c_{k-1}c_{2s+2-k})\!\!\sum_{i=k}^{2s+1-k}h_\o(i)\label{dsr1:odd}.
\]
Since the sequence $C$ is log-concave, we have
\[
c_kc_{2s+1-k}-c_{k-1}c_{2s+2-k}\gge 0.
\]
Thus the desired \cref{dsr:h:even} holds if 
\[
\sum_{i=k}^{2s+1-k}h_\o(i)\gge0\qquad\text{for all $0\le k\le s$}.
\]
On the other hand, from \cref{def:h:odd} of the function $h_\o$, \cref{eq:g} implies that
\begin{equation}\label{eq:h:odd}
h_\o(i)+h_\o(2s-i)=0.
\end{equation}
In particular, taking $i=s$ in \cref{eq:h:odd}, one obtains that
\begin{equation}\label{eq:h:odd:s}
h_\o(s)=0.
\end{equation}
By using \cref{eq:h:odd,eq:h:odd:s}, we can simplify
\[
\sum_{i=k}^{2s+1-k}h_\o(i)\=h_\o(2s+1-k).
\]
When $0\le k\le s$, we have $2s+1-k\ge k$. Hence, \cref{def:h:odd,ineq:g} imply that
\[
h_\o(2s+1-k)\=g(2s+1-k,\,k)\gge 0.
\]
This completes the proof.
\end{proof}

\end{document}